\documentclass[11pt]{amsart}

\usepackage{epigamath-voisin-runin}


\usepackage[english]{babel}


\numberwithin{equation}{section}


\usepackage[shortlabels]{enumitem}

\usepackage[T1]{fontenc}
\usepackage{mathtools}
\usepackage{graphicx}
\usepackage{tikz}
\usetikzlibrary{chains}
\usepackage{tcolorbox}
\usepackage{mathabx}
\usepackage{scrextend}
\PassOptionsToPackage{pdfusetitle,pagebackref,colorlinks}{hyperref}
\usepackage{bookmark}

\usepackage{amsmath}
\usepackage{amsthm}
\usepackage{amssymb}

\usepackage[all]{xy}

\usepackage[OT2,T1]{fontenc}


\newtheorem{thm}{Theorem}[section]

\newtheorem{lem}[thm]{Lemma}
\newtheorem{cor}[thm]{Corollary}

\newtheorem*{claim}{Claim}

\theoremstyle{definition}

\theoremstyle{remark}
\newtheorem{remark}[thm]{Remark}
\newtheorem{ex}[thm]{Example}



\def\isolow{\vbox to 0pt{\vss\hbox{$\scriptstyle\sim$}\vskip-2pt}}
\newcommand{\congpf}{\xrightarrow{\isolow}}
\newcommand{\longcongpf}{\xrightarrow{\;\isolow\;}}
\newcommand{\lra}{\longrightarrow}

\DeclareRobustCommand\longtwoheadrightarrow
    {\relbar\joinrel\twoheadrightarrow}
    
\DeclareRobustCommand\longtwoheadleftarrow
    {\joinrel\twoheadleftarrow\joinrel\relbar}   

\DeclareRobustCommand\longhookrightarrow
    {\lhook\joinrel\relbar\joinrel\rightarrow}

    \renewcommand{\cong}{\simeq}

\newcommand{\kt}{\mathcal T}
\newcommand{\kn}{\mathcal N}
\newcommand{\ke}{\mathcal E}
\newcommand{\kf}{\mathcal F}
\newcommand{\ks}{\mathcal S}
\newcommand{\ko}{\mathcal O}
\newcommand{\QQ}{\mathbb Q}

\newcommand{\ZZ}{\mathbb Z}

\newcommand{\PP}{\mathbb P}
\newcommand{\GG}{\mathbb G}

\DeclareMathOperator{\CH}{CH}

\DeclareMathOperator{\pr}{pr}
\DeclareMathOperator{\Gr}{Gr}
\DeclareMathOperator{\Alb}{Alb}

\DeclareMathOperator{\id}{id}
\DeclareMathOperator{\Bl}{Bl}
\DeclareMathOperator{\res}{res}

\DeclareSymbolFont{cyrletters}{OT2}{wncyr}{m}{n}
\DeclareMathSymbol{\Sha}{\mathalpha}{cyrletters}{"58}

\newcommand{\TBC}[1]{}

\setlist[enumerate,1]{label={\rm(\roman*)}, ref=\roman*} 


\YearArticle{2023} \EpigaArticleNr{5} \ReceivedOn{December 5, 2022}
\InFinalFormOn{April 28, 2023}
\AcceptedOn{May 21, 2023}

\title{Chow groups of surfaces of lines in cubic fourfolds}
\titlemark{Chow groups of surfaces of lines in cubic fourfolds}

\author{D.~Huybrechts}
\address{
Mathematisches Institut, Universit\"at Bonn, Endenicher Allee 60, 53115 Bonn, Germany}
\email{huybrech@math.uni-bonn.de}

\authormark{D.~Huybrechts}

\AbstractInEnglish{
The surface of lines in a cubic fourfold intersecting a fixed line splits motivically into two parts, one of which resembles a K3 surface. We define the analogue of the Beauville--Voisin class and study the push-forward map to the Fano variety of all lines with respect to the natural splitting of the Bloch--Beilinson filtration introduced by Mingmin Shen and Charles Vial.}

\MSCclass{14C15, 14J70, 14J28, 14J29}

\KeyWords{Cubic fourfolds, Fano variety of lines, Chow groups, Bloch conjecture, Beauville--Voisin class}


\acknowledgement{This research was supported by the ERC Synergy Grant HyperK, Grant agreement ID 854361, and the Hausdorff Center for Mathematics, Bonn (Germany's Excellence Strategy--EXC-2047/1--390685813).}


\dedication{To Claire Voisin, with admiration}

\begin{document}


\maketitle

\begin{prelims}

\DisplayAbstractInEnglish

\bigskip

\DisplayKeyWords

\medskip

\DisplayMSCclass

\end{prelims}


\newpage

\setcounter{tocdepth}{1}

\tableofcontents


\section{Introduction}

Fano varieties $F=F(X)$ of lines $L\subset X$ in smooth complex cubic fourfolds $X\subset\PP^5$ are hyperk\"ahler manifolds of dimension four and as such have many properties and features in common with K3 surfaces. In particular, the Chow ring 
$\CH^\ast(F)$ of the Fano variety is of great interest and has been investigated from various angles by Beauville \cite{B1}, Voisin \cite{V2,V1}, M.\ Shen and Vial \cite{SV}, J.\ Shen, Yin, and Zhao \cite{SYZ,SY}, and many others. 

\subsection{} The first goal of this note is to study zero-dimensional cycles on $F$ 
 from the perspective of distinguished surfaces contained in $F$. More specifically, we continue our investigation \cite{H1} and consider the surface $F_{L_0}\subset F$ of all lines $L\subset X$ intersecting a given fixed line $L_0\subset X$. The surfaces $F_{L_0}$ and their Chow groups are of great interest in their own right, but here, in order to gain a better understanding of the Chow group of zero-dimensional cycles $\CH^4(F)=\CH_0(F)$ on $F$, 
we  first have a closer look at the push-forward map $\CH_0(F_{L_0})\to\CH_0(F)$. 

The target and source of this map both come with a natural
decomposition, and our first result explains how they are related.

\begin{itemize}
\item  There exists a natural involution $\iota\colon F_{L_0}\congpf F_{L_0}$, and its quotient $ F_{L_0}\to D_{L_0}\coloneqq F_{L_0}/_\pm$
 describes a quintic surface $D_{L_0}\subset\PP^3$  with $16$ nodes. Homologically trivial zero-cycles on $D_{L_0}$ can be identified with  $\iota$-invariant homologically trivial zero-cycles on $F_{L_0}$; \textit{i.e.}\ $\CH_0(D_{L_0})_{\hom}$ is
the first summand of the $\iota$-eigenspace decomposition 
\begin{equation}\label{eqn:1stdecomp}
\CH_0\left(F_{L_0}\right)_{\hom}=\CH_0\left(F_{L_0}\right)_{\hom}^+\oplus\CH_0\left(F_{L_0}\right)^-.
\end{equation}
The second summand gives back the interesting part of the Chow group of the cubic itself, namely $\CH_0(F_{L_0})^-\cong \CH_1(X)_{\hom}$ via the Fano correspondence.

\item  On the Fano side, the group $A\coloneqq \CH_0(F(X))$ splits
as 
\begin{equation}\label{eqn:2nddecomp}
A=A_4\oplus A_2\oplus A_0.
\end{equation}
 Here, $A_0$ is generated by a distinguished class $c_F$, the analogue of the Beauville--Voisin class on a
K3 surface, and $A_4$ is the deepest part of
the Bloch--Beilinson filtration. This decomposition was studied
in depth by Shen and Vial \cite{SV}; see Section~\ref{sec:BBFano} for more details and references.
\end{itemize}

Our first result clarifies the link between the two decompositions 
(\ref{eqn:1stdecomp}) and (\ref{eqn:2nddecomp}). The result confirms
 certain aspects of the Bloch--Beilinson philosophy in the present situation.

\pagebreak
\begin{thm}\label{thm:main}
Assume $L_0\subset X$ is a general line in a smooth cubic fourfold $X\subset\PP^5$.
\begin{enumerate}
\item\label{thm:main-1} The push-forward map of invariant homologically trivial cycles
\begin{equation}\label{eqn:thmmain1}
\CH_0\left(D_{L_0}\right)_{\hom}\cong \CH_0\left(F_{L_0}\right)^+_{\hom}\to A
\end{equation}
takes image in $A_4\subset A$. For  very general $X$  and  very general
$L_0\subset X$, the image is non-trivial.
Furthermore, the image of the composition 
$$
\CH_0\left(D_{L_0}\right)\lra A\longtwoheadrightarrow A/A_4=A_2\oplus A_0
$$
is spanned by the class  $c_{F}(L_0)\coloneqq 2c_F-[L_0]_2$,
where $[L_0]_2$ denotes the $A_2$-component of\, $[L_0]$.

\item \label{thm:main-2} The composition of the push-forward map and the
projection $A\twoheadrightarrow A_2\oplus A_0$
describes an isomorphism
$$
\CH_0\left(F_{L_0}\right)^-\longcongpf A_2$$
with its
inverse induced by the map $[L]\mapsto (-1/2)[F_L]|_{F_{L_0}}$. Furthermore, if $L_0$ is very general, the image of\, $\CH_0(F_{L_0})^-\to A$ is not contained in $A_2$.
\end{enumerate}
\end{thm}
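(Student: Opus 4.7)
Both parts of the theorem rest on the same geometric input: for every $L\in F_{L_0}$ the three lines $L$, $\iota(L)$, $L_0$ form the triangle cut on $X$ by the plane $\langle L,L_0\rangle$. My plan is to combine this with the Shen--Vial triangle identity $[L]_2+[\iota(L)]_2+[L_0]_2=0$ in $A_2$, the fact that $[L]_0=c_F$ for every line, and the Shen--Vial product formula for $[F_L]\cdot[F_{L_0}]$ in $\CH^\ast(F)$ from \cite{SV}.

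\textbf{Part (i).} For a single point $L\in F_{L_0}$ the image under the push-forward is $[L]+[\iota(L)]\in A$, which modulo $A_4$ equals $2c_F-[L_0]_2=c_F(L_0)$ by the two identities just recalled. Since this expression does not depend on $L$, the difference of the images of any two points of $D_{L_0}$ already lies in $A_4$, proving that $\CH_0(F_{L_0})^+_{\hom}\to A$ lands in $A_4$; the same calculation identifies $c_F(L_0)$ as the generator of the image of the unrestricted map $\CH_0(D_{L_0})\to A_2\oplus A_0$. The non-triviality assertion for very general $X$ and $L_0$ is not formal; I would treat it by a Bloch--Srinivas-style spreading argument, reducing to a Hodge-theoretic claim and verifying that the map from $H^{2,0}$ of a desingularization of $D_{L_0}$ to $H^{3,1}(F)$ induced by the push-forward is nonzero, using that the desingularized quintic has positive geometric genus together with an explicit residue computation of holomorphic forms on $F$.

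\textbf{Part (ii).} The Fano correspondence identifies $\CH_0(F_{L_0})^-\cong\CH_1(X)_{\hom}$, and the push-forward composed with the projection to $A_2\oplus A_0$ sends a degree-zero class $\sum a_i[L_i]$ to $\sum a_i[L_i]_2\in A_2$, the $A_0$-contributions cancelling. To verify that $[L]\mapsto -\tfrac{1}{2}[F_L]|_{F_{L_0}}$ is an inverse, I would apply the projection formula $j_\ast([F_L]|_{F_{L_0}})=[F_L]\cdot[F_{L_0}]$ in $A$ and invoke the Shen--Vial product identity, which expresses this product modulo $A_4$ as an explicit combination of $[L]_2$, $[L_0]_2$ and $c_F$; after projecting to the $\iota$-antisymmetric part in the source, the $L_0$- and $c_F$-contributions drop out and one recovers $[L]_2$. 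The final statement, that for very general $L_0$ the image of $\CH_0(F_{L_0})^-$ is not entirely contained in $A_2$, I would handle by a parallel Hodge-theoretic analysis on the primitive cohomology of $F_{L_0}$ along the lines of the non-triviality argument in (i).

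\textbf{Main obstacle.} The motivic bookkeeping above is comparatively routine once the triangle relation and the product formula for $[F_L]\cdot[F_{L_0}]$ are in place. The principal obstacle is the pair of non-triviality statements for very general members; these are not consequences of the decomposition and would require a careful Hodge-theoretic/deformation argument, most likely in the style of Voisin's work on zero-cycles on $K3$ and hyperk\"ahler varieties.
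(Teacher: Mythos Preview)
Your treatment of the ``routine'' motivic parts is essentially correct and close to the paper. For the first assertion of~(i) you use the triangle relation $[L]+[\iota(L)]+[L_0]\equiv 3c_F\pmod{A_4}$ directly, whereas the paper argues via the map $\psi\colon A\to\CH_2(F)$, $[L]\mapsto[F_L]$, and the identification $A_4=\ker\psi$; these are equivalent. For the isomorphism in~(ii), both you and the paper invoke the Fano correspondence $\CH_0(F_{L_0})^-\cong\CH_1(X)_{\hom}\cong A_2$ from \cite{H1} and \cite{SV}. Where you differ is in the inverse formula: the paper proves the stronger statement that the composition $\beta\colon\CH_0(F_{L_0})^-\to A\xrightarrow{\psi}\CH_2(F)\to\CH_0(F_{L_0})$ equals $-2\cdot\id$ \emph{in} $\CH_0(F_{L_0})$, via an excess-intersection computation along the curves $C_{x_i}=F_{L_i}\cap F_{L_0}$. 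Your projection-formula approach only computes $j_*\beta(\alpha)=-2\,j_*\alpha$ in $A$, which suffices for the inverse after projecting to the anti-invariant part (using part~(i) to control the invariant piece), but does not recover the paper's Lemma on $F_{L_0}$ itself.

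The genuine gap is in both non-triviality statements. Your proposed Hodge-theoretic route does not work as stated. First, the Gysin map sends $H^{2,0}(\widetilde{D}_{L_0})$ to $H^{4,2}(F)$, not $H^{3,1}(F)$, so the map you want to show is nonzero does not exist. More fundamentally, the transpose correspondence $H^0(F,\Omega_F^2)\to H^0(D_{L_0},\Omega_{D_{L_0}}^2)$ is \emph{zero} (this is why the image lands in $A_4$ in the first place), and non-vanishing of a Chow-level map into $A_4$ is not detected by any single-fibre Hodge datum. The paper's arguments are instead short and geometric. For~(i): a fixed point $L$ of $\iota$ satisfies $2L\cup L_0=\PP^2\cap X$, so $2[L]$ lies in the image of $\CH_0(D_{L_0})$; if this image were contained in $A_2\oplus A_0$ for every general $L_0$, then $[L]\in A_2\oplus A_0$ for \emph{every} line $L$ (since every $L$ arises this way for some $L_0$), contradicting $A_4\neq 0$. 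For~(ii): regard $L_0$ itself as a point of $F_{L_0}$; then $\iota(L_0)=f(L_0)$ is its image under the Voisin map, and the push-forward of $[\iota(L_0)]-[L_0]$ equals $f_*[L_0]-[L_0]=3([L_0]_4-[L_0]_2)$, which has nonzero $A_4$-component whenever $[L_0]_4\neq 0$. Neither argument needs any spreading or transcendental input.
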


\subsection{} The second goal of the paper is to investigate the K3 nature of
the  `negative half\,' $F_{L_0}^-$ of the surface $F_{L_0}$. The surface $F_{L_0}$ `decomposes' into two parts, and one component behaves very much like a K3 surface. On the level of Chow groups, the two parts correspond to the
$\iota$-invariant and the $\iota$-anti-invariant cycles, and
for rational Chow motives we have the decomposition ${\mathfrak h}(F_{L_0})\cong{\mathfrak h}(F_{L_0})^+\oplus {\mathfrak h}(F_{L_0})^-$; see the discussion in \cite[Section~4.6]{H1}. The K3  nature of the anti-invariant part is best seen by the observation
that $H^2(F_{L_0},\ZZ)^-_{\pr}$ is a Hodge structure of K3 type of rank $22$, which,
up to a factor and a Tate twist, is in fact Hodge isometric to  the primitive middle cohomology
$H^4(X,\ZZ)_{\pr}$ of the cubic fourfold;
\textit{cf.}  \cite[Theorem~0.2]{H1}.

One distinguished feature, \textit{cf.}   \cite{BV}, of the Chow ring of a complex K3 surface $S$ is that
despite $\CH^2(S)$ being big, all intersection products
$\alpha_1\cdot\alpha_2$ of classes $\alpha_1,\alpha_2\in\CH^1(S)$
 are proportional. They are multiples of the Beauville--Voisin class $c_S\in\CH^2(S)$, which
is realised by any point contained in a rational curve. 
Transplanted in our context, this triggers the question whether 
products $\alpha_1\cdot\alpha_2$ of two anti-invariant 
classes $\alpha_1,\alpha_2\in \CH^1(F_{L_0})^-$ are all multiples of a distinguished
class $c_{L_0}\in \CH^2(F_{L_0})$, which would necessarily be invariant. Our second result
shows that this is the case for primitive classes and after push-forward to $F$.

\begin{thm}\label{thm:main2}
For a general line $L_0\subset X$ in a smooth cubic fourfold, 
the composition 
$$
\CH^1\left(F_{L_0}\right)_{\pr}^-\times\CH^1\left(F_{L_0}\right)_{\pr}^-\lra \CH^2\left(F_{L_0}\right)^+\lra A
$$
takes image in $\ZZ\cdot c_{F}(L_0)$ with $c_F(L_0)=2c_F-[L_0]_2\in A_2\oplus A_0\subset A$ as above.
\end{thm}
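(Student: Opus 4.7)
The plan is to first reduce the claim to a statement about the deepest piece $A_4$ of the Bloch--Beilinson filtration, and then to use the Fano correspondence between $F_{L_0}$ and $X$ to compute the relevant products on the cubic fourfold side, where the intersection calculus is essentially trivial.

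Since $\iota$ acts by $-1$ on $\CH^1(F_{L_0})^-$, the product $\alpha\cdot\beta$ of two anti-invariant divisor classes is automatically $\iota$-invariant, so the first arrow in the statement is well-defined. By Theorem~\ref{thm:main}\eqref{thm:main-1}, the image in $A_2\oplus A_0 = A/A_4$ of any class in $\CH^2(F_{L_0})^+$ is a multiple of $c_F(L_0)$, while the homologically trivial part of $\CH^2(F_{L_0})^+$ maps into $A_4$. It therefore suffices to prove the stronger statement that for primitive $\alpha,\beta\in\CH^1(F_{L_0})^-_{\mathrm{pr}}$ the homologically trivial part of $\alpha\cdot\beta$ pushes to zero in $A_4$.

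To establish this vanishing I would exploit the Hodge isometry $H^2(F_{L_0},\QQ)^-_{\mathrm{pr}}\simeq H^4(X,\QQ)_{\mathrm{pr}}(1)$ of \cite[Theorem~0.2]{H1}, realised by the universal line
$$P = \{(L,x)\in F_{L_0}\times X : x\in L\},$$
which is a $\PP^1$-bundle $p_F\colon P\to F_{L_0}$ with evaluation $q\colon P\to X$. Up to a non-zero rational scalar, the correspondence $p_{F\ast}\circ q^\ast\colon\CH^2(X)\to\CH^1(F_{L_0})$ realises the Hodge isometry on $(1,1)$-classes, so after passing to $\QQ$-coefficients and working modulo terms whose image in the primitive anti-invariant part vanishes, every $\alpha\in\CH^1(F_{L_0})^-_{\mathrm{pr}}$ can be written as $p_{F\ast}(q^\ast\gamma_\alpha)$ for some $\gamma_\alpha\in\CH^2(X)_{\mathrm{prim}}$. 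For such presentations of $\alpha$ and $\beta$, the projection formula along $p_F$ together with the fibre-product structure on $P\times_{F_{L_0}}P$ expresses $j_\ast(\alpha\cdot\beta)\in\CH_0(F)$, with $j\colon F_{L_0}\hookrightarrow F$, as the push-forward to $F$ of $\gamma_\alpha\cdot\gamma_\beta$ together with explicit correction terms involving Chern classes of the relative tangent bundle of $p_F$ and classes pulled back from $F$. Since a smooth cubic fourfold is rationally connected, $\CH_0(X)\cong\ZZ$, and hence $\gamma_\alpha\cdot\gamma_\beta\in\CH^4(X)\cong\ZZ$ is determined by the single intersection number $\langle\gamma_\alpha,\gamma_\beta\rangle$; the reduction in the previous paragraph then forces the resulting canonical zero-cycle class in $F$ to be an integer multiple of $c_F(L_0)$.

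The main obstacle, in my view, is the Chow-theoretic lift of the Hodge isometry in a sufficiently clean form. One has to check that the correction terms above---coming from the $\PP^1$-bundle Chern classes on $P$, from $\iota$-invariant divisors on $F_{L_0}$ pulled back from $F$, and possibly from a non-trivial $\mathrm{Pic}^0(F_{L_0})$---contribute only to the non-primitive or $\iota$-invariant part, and in particular contribute trivially to the $A_4$-component after push-forward. I expect to handle this first with $\QQ$-coefficients using a motivic decomposition of $\mathfrak h(F_{L_0})^-$ in the spirit of the Chow--K\"unneth decomposition for K3 surfaces \cite{BV}, and to recover the integral statement at the end by exploiting that the target $\ZZ\cdot c_F(L_0)$ is torsion-free.
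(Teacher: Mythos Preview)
Your initial reduction via Theorem~\ref{thm:main}\eqref{thm:main-1} is correct: once one knows that any $\iota$-invariant zero-cycle on $F_{L_0}$ maps to a multiple of $c_F(L_0)$ in $A/A_4$, the entire content of the theorem is that $j_*(\alpha\cdot\beta)$ has vanishing $A_4$-component. This is exactly what the paper isolates as Lemma~\ref{lem:A2squares}.

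The gap is in how you propose to establish this vanishing. Pulling the divisors back from $\CH^2(X)$ through the incidence correspondence $P\subset F_{L_0}\times X$ and attempting to express $j_*(\alpha\cdot\beta)$ in terms of $\gamma_\alpha\cdot\gamma_\beta\in\CH^4(X)\cong\ZZ$ leaves you, as you yourself say, with correction terms coming from the $\PP^1$-bundle structure, and you offer no mechanism for showing these land in $A_2\oplus A_0$. There is no evident reason they should: controlling the $A_4$-component of a zero-cycle on $F$ is precisely the difficult part, and nothing in the fibre-product formula on $P\times_{F_{L_0}}P$ singles out $A_4$. The suggestion to handle this via a motivic Chow--K\"unneth decomposition of $\mathfrak h(F_{L_0})^-$ is essentially circular, since the existence of such a splitting with the right multiplicative behaviour is what one is trying to establish.

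The paper's argument avoids this trap entirely by lifting to $F$ rather than to $X$. The key input you are missing is the restriction isomorphism $\CH^1(F)_{\pr}\cong\CH^1(F_{L_0})^-_{\pr}$ of \cite[Theorem~0.2]{H1}: every primitive anti-invariant divisor $\delta$ on $F_{L_0}$ is $\alpha|_{F_{L_0}}$ for a unique primitive $\alpha\in\CH^1(F)$. The push-forward of $\delta^2$ is then literally $\alpha^2\cdot[F_{L_0}]\in\CH_0(F)$, and now the Shen--Vial multiplicative structure on $\CH^*(F)$ applies directly. Writing $[F_{L_0}]=\psi(c_F)+\big([F_{L_0}]-\psi(c_F)\big)$ with the second summand in $\Im(\psi)_{\hom}$, one has $\alpha^2\cdot\psi(c_F)\in A_0$ by Voisin's result \cite[Theorem~0.4]{V2} together with \eqref{eqn:F2ndSV}, and $\alpha^2\cdot\big([F_{L_0}]-\psi(c_F)\big)\in A_2$ by the Shen--Vial relation \eqref{eqn:gamma}. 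No correction terms appear. So: lift to $F$, not to $X$.
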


This suggests that we should view
$$\CH^\ast\left(F_{L_0}^-\right)\coloneqq\ZZ\oplus \CH^1\left(F_{L_0}\right)^-_{\pr}
\oplus\left(\CH^2\left(F_{L_0}\right)^-\oplus \ZZ\cdot c_{F}(L_0)\right)$$
as the Chow ring of $F_{L_0}^-$, \textit{i.e.}\ of the `K3 half\,' of the surface $F_{L_0}$.
Coming back to Theorem~\ref{thm:main}, the natural isomorphism
$\CH_0(F_{L_0})^-\cong A_2$ highlights the K3 character of the middle
part $A_2$ of the decomposition (\ref{eqn:2nddecomp}).

\begin{remark}\label{rem:Intro} One could ask whether 
 $\CH^2(F_{L_0}^-)=\CH^2(F_{L_0})^-\oplus \ZZ\cdot c_{F}(L_0)$ can be realised simply as the image of $\CH_0(F_{L_0})\to\CH_0(F)$, so that the hyperk\"ahler fourfold
$F$ singles out the `K3 half\,' of $F_{L_0}$. However, this fails due to the 
slightly counter-intuitive non-triviality of \eqref{eqn:thmmain1} 
in Theorem~\ref{thm:main}\eqref{thm:main-1}. 

In the opposite direction, one could wonder whether \eqref{eqn:thmmain1} is maybe injective. This would have the effect
that $c_F(L_0)$ lifts uniquely to a class $c_{L_0}\in\CH_0(F_{L_0})^+$
such that
$$
\ZZ\oplus \CH^1\left(F_{L_0}\right)^-_{\pr}
\oplus\left(\CH^2\left(F_{L_0}\right)^-\oplus \ZZ\cdot  c_{L_0}\right)\subset\CH^*\left(F_{L_0}\right)
$$
is a subring, \textit{i.e.}\ such that $\alpha_1\cdot\alpha_2\in \ZZ \cdot c_{L_0}$ for
all $\alpha_1,\alpha_2\in \CH^1(F_{L_0})_{\pr}^-$. The class $c_{L_0}$ on $F_{L_0}$ would then be the geometric realisation of the Beauville--Voisin class
for the K3 half $F_{L_0}^-$; see Remark~\ref{rem:final} for further comments.
\end{remark}

The main tool to prove these results is the map $\psi\colon \CH_0(F)\to \CH_2(F)$, $L\mapsto [F_L]$, which has been studied and used before; see \cite{SV,SYZ,SY,V1}. Further restricting to surfaces contained in $F$ allows one to transform zero-cycles on $F$ into zero-cycles on surfaces. However, since $\psi$ annihilates the deepest part of the Bloch--Beilinson filtration
$A_4\subset\CH_0(F)$, this technique does not provide
 control over all zero-cycles, on the Fano variety
or on surfaces contained in it.

\vskip\baselineskip

\noindent {\large\bf Acknowledgement.}
\medskip

I wish to thank Claire Voisin for helpful comments and inspiration in general. I am particularly indebted to Charles Vial for many helpful comments. In particular, Remark~\ref{rem:ref}\eqref{rem:ref-2} follows his suggestions.

\section{Zero-cycles on hyperk\"ahler fourfolds}
 It is expected that the Chow groups of an arbitrary smooth projective
variety $Z$ are endowed with natural (Bloch--Beilinson) filtrations
$$\cdots \subset F^2\subset F^1\subset \CH^k(Z)$$
that enjoy certain functoriality properties. Also, the filtrations
should be finite, more precisely $F^i=0$ for $i>k$, and the behaviour of algebraic correspondences on the graded parts $F^i/F^{i+1}$ should be determined by their action on the corresponding pieces
of the Hodge structure of $Z$.

For example, any correspondence
$\Gamma_\ast\colon \CH_0(Z_1)\to \CH_0(Z_2)$ should respect the filtration, \textit{i.e.}\   $\Gamma_*(F^i(Z_1))\subset F^i(Z_2)$, and the induced
map ${\Gr}^i(\Gamma_\ast)\colon (F^i/F^{i+1})(Z_1)\to  (F^i/F^{i+1})(Z_2)$
should be zero if and only if the cohomological action $\Gamma^\ast\colon H^0(Z_2,\Omega_{Z_2}^i)\to H^0(Z_1,\Omega_{Z_1}^i)$ is trivial. This is often seen as a generalisation of Bloch's conjecture; see \cite[Conjecture~23.22]{VoisinHodge}.

There are various suggestions for such filtrations proposed
by H.~Saito \cite{HSaito}, S.~Saito \cite{SSaito}, Nori \cite{Nori}, and Voisin \cite{V3},
but proving the desired properties in general seems out of reach for now.

\begin{ex}\label{ex:surface}
For zero-cycles on a smooth projective surface $S$, the various requirements   determine the Bloch--Beilinson filtration completely: concretely, $F^1\CH_0(S)$ is the subgroup of all cycles
of degree zero or, equivalently, of those that are algebraically or, still equivalently, homologically trivial,
while $F^2\subset F^1$ is the kernel of the Albanese morphism
$F^1\to{\Alb}(S)$.
\end{ex}

\subsection{}
For a hyperk\"ahler manifold $Z$  of dimension four, 
 the conjectural filtration on zero-cycles  would be of the form $$0\subset F^4\subset F^3\subset F^2\subset F^1\subset F^0=A\coloneqq\CH_0(Z).$$
Recall that by Roitman's result \cite{Roit} the group $A$ is torsion-free, and,
by using the divisibility of Jacobians of curves contained in $Z$, the kernel of the degree
map $\deg\colon A\to\ZZ$ is known to be divisible.

Bloch's conjecture applied to the identity correspondence $[\Delta]_\ast={\id}$ leads to the following picture. The vanishings $H^0(Z,\Omega_Z^3)=0=H^0(Z,\Omega_Z^1)$ should imply $F^4=F^3$ and $F^2=F^1$, and the non-vanishings $H^0(Z,\Omega_Z^4)\ne0 \ne H^0(Z,\Omega_Z^2)$ should give $F^4\ne0$ and $F^3\ne F^2$. So the conjectural Bloch--Beilinson filtration in this case would actually look like
\begin{equation}\label{eqn:conjfiltr}
0\subsetneqq F^4=F^3\subsetneqq F^2=F^1\subsetneqq F^0=A=\CH_0(Z).
\end{equation} In full generality, only $F^2=F^1$ is
rigorously defined. Also note that a generalisation of an argument of Mumford for surfaces shows that $F^1$ is not trivial and in fact not representable by any finite-type scheme; see \cite[Theorem~22.15]{VoisinHodge}.

Taking inspiration from the case of K3 surfaces \cite{BV}, Beauville \cite{B1} asked whether the hyperk\"ahler structure of $Z$ gives rise
to a certain splitting of the conjectural filtration (\ref{eqn:conjfiltr}). This would  lead to a decomposition
$$A=A_4\oplus A_2\oplus A_0,$$
with $F^4=F^3=A_4$, $F^2=F^1=A_4\oplus A_2$, and $\deg\colon A_0\congpf \ZZ$. The generator of degree one of $A_4$ is called the Beauville--Voisin class $c_Z\in A$.

Shen and Vial \cite{SV} suggested to use a certain lift $\tilde q\in \CH^2(Z\times Z)$ of the class $q_Z\in S^2H^2(Z,\QQ)\subset H^4(Z\times Z,\QQ)$ induced
by the Beauville--Bogomolov--Fujiki pairing as the analogue of the Poincar\'e bundle for abelian varieties to define the decomposition as
$$A_k\coloneqq \left\{\alpha\in A\mid \exp(\tilde q)(\alpha)\in \CH^k(Z)\right\}.$$

An alternative and more geometric definition was proposed by Voisin \cite{V1}:
One first introduces the orbit ${\rm O}_x\subset Z$ of any point $x\in Z$
as the set of all points  that are rationally equivalent to $x$. The orbit turns out to be a countable union of closed subsets of dimensions at most two. Then, 
$A_0\subset A$ and $A_2\oplus A_0$ should be generated by those points with $\dim {\rm O}_x\geq 2$ and $\dim {\rm O}_x\geq 1$, respectively.

\subsection{}\label{sec:BBFano} Typically, conjectures concerning hyperk\"ahler manifolds (of dimension four) are first checked for the Hilbert scheme $S^{[2]}$ of a K3 surface $S$ and for the Fano variety $F=F(X)$ of lines contained in a smooth cubic fourfold $X\subset \PP^5$. This has been undertaken for Beauville's question by
Beauville \cite{B1} himself,  by Voisin \cite{V2,V1}, by M.\ Shen and Vial \cite{SV}, and more recently by J.\ Shen, Yin, and Zhao \cite{SY,SYZ}.  We briefly review some of the results for the Fano variety
$F=F(X)$ and its group of zero-cycles $A=\CH_0(F)$.

Firstly, there is a natural candidate for the Beauville--Voisin class, \textit{i.e.}\
the generator of $A_0$, namely the unique class  $c_F\in \CH_0(F)$ satisfying
$$6\cdot c_F= g\cdot[C_x].$$
Here, $g\in \CH^1(F)$ is the Pl\"ucker polarisation and 
$C_x\coloneqq\{L\mid x\in L\}\subset F$, which is a curve for the general point $x\in X$.
Observe that, since the cubic $X$ is unirational, the class of the curve
$[C_x]\in \CH^3(F)$ does not depend on the choice of the general point $x$.
The class $c_F$ indeed has the desired multiplicative property: according to a result of Voisin \cite[Theorem~0.4]{V2}, the degree four component of any polynomial expression involving only 
the Chern classes ${\rm c}_2(F)$ and ${\rm c}_4(F)$ and classes $\alpha\in \CH^1(F)$  is a multiple of $c_F$.

Secondly, one exploits the Voisin map $f\colon F\dashrightarrow F$ and its induced action on the Chow group $f_\ast\colon A\to A$.
Recall that the Voisin map is the endomorphism of degree 16
that maps a line  of the first type $L$ to the residual line $L'$ of the intersection
$L\subset P_L\cap X$ with the unique plane $P_L\cong\PP^2$ tangent to $L\subset X$; \textit{i.e.}\ $P_L\cap X=2L\cup L'$.  Shen and Vial 
\cite[Theorem~21.9]{SV} proved that the induced action $f_\ast$ on $\CH_0$
has eigenvalues $4$, $-2$, and $1$ and declare the corresponding eigenspaces 
 to be $A_4$, $A_2$, and $A_0=\ZZ\cdot c_F$.

Thirdly, mapping a line $L$ to the surface $F_L$ induces a map
\begin{equation}\label{eqn:psi}
\psi\colon A\lra \CH_2(F),\quad [L]\longmapsto [F_L], 
\end{equation}
and one defines $A_4$ as the kernel of $\psi$. Alternatively, $A_4$ can also be described as the homologically trivial part of the subgroup
generated by all triangles, \textit{i.e.}\ by sums $[L_0]+[L_1]+[L_2]$ of triples of lines  $L_0,L_1,L_2\subset X$
spanning a plane. Yet another possibility is to describe $A_4$  as the kernel
of the Fano correspondence $\varphi\colon A=\CH_0(F)\to \CH_1(X)$
mapping the class of a point in $F$ corresponding to a line in $X$ to the class of that line. In fact, $\varphi$ factors through an isomorphism: 
\begin{equation}\label{eqn:CH1X}
\varphi\colon A\longtwoheadrightarrow A/A_4\cong A_2\oplus A_0\longcongpf \CH_1(X).\end{equation}

It is \textit{a priori}
not clear that the various definitions of the deepest
part of the Bloch--Beilinson filtration
\begin{equation}\label{eqn:Defi1}
A_4=\left\{\alpha\mid f_\ast\alpha=4\cdot\alpha\right\}=\ker(\psi)=\ker(\varphi)=\left\langle[L_0]+[L_1]+[L_2]\right\rangle_{\hom}
\end{equation}
or of its complement $$A_2\oplus A_0=\left\{\alpha\mid f_*\alpha=-2\cdot\alpha\right\}\oplus
\left\{\alpha\mid f_*\alpha=\alpha\right\}
 =\left\langle [L]\mid \dim {\rm O}_{L}\geq 1\right\rangle$$
describe the same subgroups. The often geometrically quite intricate arguments have been
provided by Shen and Vial \cite{SV} and Voisin \cite{V1}. Note that
$A_4$ and $A_2$ are both non-trivial, but this is not automatic. In fact,
both parts should be thought of as `big', \textit{i.e.}\ non-representable,
although $A_2$ is surface-like while $A_4$ `lives' in dimension four;  
see below for more on this.

Of the many results in the comprehensive monograph of Shen and Vial \cite{SV}, the following will be important for our discussion:

\begin{enumerate}
\item The first concerns the multiplication $\CH^2(F)\times\CH^2(F)\to \CH^4(F)=A$. For a triangle $L_0\cup L_1\cup L_2\subset X$, the classes $[F_{L_i}]\in \CH^2(F)$ satisfy, \textit{cf.}  \cite[Proposition~20.7]{SV}, 
\begin{equation}\label{eqn:F1stSV}
\left[F_{L_1}\right]\cdot \left[F_{L_2}\right]=6\cdot c_F+[L_0]-[L_1]-[L_2].
\end{equation}
\item The second is about the image of the distinguished class $c_F$ under
(\ref{eqn:psi}). 
According to \cite[Lemma~A.5]{SV}, one has
\begin{equation}\label{eqn:F2ndSV}
3\cdot \psi(c_F)=g^2-{\rm c}_2(\ks_F)=(1/8)(5g^2+{\rm c}_2(F)),
\end{equation}
where $\ks_F$ is the universal subbundle on $F\subset \GG(1,\PP^5)$.

\item For every homologically trivial class $\gamma\in{\Im}(\psi)$, which according to \cite[Section~21]{SV} is equivalent to being an algebraically trivial class in $\CH^2(F)$, and 
every class $\alpha\in\CH^1(F)$ we have
\begin{equation}\label{eqn:gamma}
\alpha^2\cdot\gamma\in A_2.
\end{equation}
This follows from combining $A_2=\CH^1(F)_0^{\cdot 2}\cdot\CH^2(F)_2$,
see \cite[Proposition~22.2]{SV},  $\CH^1(F)_0=\CH^1(F)$, see \cite[Theorem~2]{SV},
$\CH^2(F)_2=V^{2}_{-2}$, see \cite[Theorem~21.9(iii)]{SV}, and
$V^2_{-2}={\mathcal A}_{\hom}={\Im}(\psi)_{\hom}$, see \cite[Definition~20.1 and Proposition~21.10]{SV}.
\end{enumerate}

\section{Special surfaces}
Apart from the surfaces $F_L\subset F=F(X)$, there are other interesting types
of surfaces in $F$. Most notably, there are the surface $F'\subset F$ of 
lines of the second type and the surfaces $F(Y)\subset F$ of lines contained
in general hyperplane sections $Y=X\cap \PP^4$.
Before turning to cycles on these surfaces, let us recall a few geometric facts. 

 All three surfaces $F_L$, $F'$, and $F(Y)$ are smooth surfaces of general type.\footnote{For $F'$ to be smooth, one needs to assume $X$ general.}
Their basic numerical invariants are as follows:
\begin{enumerate}[label=(\arabic*)]
\item $p_g(F(Y))=10$ and $q(F(Y))=5$;  see \cite[Chapter~5]{HuyCubic} for references. 
\item $p_g(F')=449$ and $q(F')=0$; see \cite{GK} and \cite[Section~6.4]{HuyCubic}.
\item $p_g(F_{L})=5$ and $q(F_L)=0$. In fact, $\pi_1(F_L)=\{1\}$, 
see \cite[Lemma~1.2 and Appendix]{H1}.
\end{enumerate}

\subsection{} Voisin proved \cite[Example~3.7]{VoisinLagr} that the surfaces $F(Y)\subset F$ are Lagrangian; \textit{cf.}  \cite[Lemma~6.4.5]{HuyCubic}. In other words, the pull-back map $H^0(F,\Omega^2_F)\to H^0(F(Y),\Omega_{F(Y)}^2)$  is zero.

The Bloch--Beilinson filtration for zero-cycles on the surface $F(Y)$ is of the form
$$
0\subset F^2\subset F^1\subset \CH_0(F(Y)),
$$
where the Albanese map
$F^1/F^2\congpf {\Alb}(F(Y))$ is an isomorphism and $F^2$ is big, \textit{i.e.}\ not repre\-sentable; see Example~\ref{ex:surface}.

As always for the push-forward map from a surface, $\CH_0(F(Y))\to A$ respects the filtration. However, since $F(Y)\subset F$ is Lagrangian, the Bloch--Beilinson conjecture predicts that the image of $F^2\CH_0(F(Y))\to A_4\oplus A_2=F^2\subset A$ is actually contained in $F^4=F^3=A_4$. This prediction can be confirmed as follows:\footnote{Thanks to C.\ Voisin for the argument.\label{note1}} As~the Fano correspondences for $X$ and for the hyperplane section $Y\subset X$ are compatible, the natural
diagram 
$$\xymatrix{\CH_0(F(Y))\ar[d]_-{\varphi_Y}\ar[r]&A\ar[d]^-{\varphi_X}\\
\CH_1(Y)\ar[r]&\CH_1(X)}$$
commutes. By the definition of the Bloch--Beilinson filtration for the
surface $F(Y)$, we know that $F^2\CH_0(F(Y))$ is the kernel
of the Albanese map. Since ${\Alb}(F(Y))\cong\CH_1(Y)_{\hom}$, 
by a result of Beauville and Murre, \textit{cf.}  \cite[Corollary~5.3.16]{HuyCubic},  this shows that $F^2\CH_0(F(Y))$ is contained
in the kernel of $\varphi_Y$. Therefore, its image in $A$ is contained in the kernel of $\varphi_X$, which is $F^4=A_4\subset A$ by (\ref{eqn:Defi1}).

Note that for the general hyperplane section $Y\subset X$, the surface $F(Y)\subset F$ is not a constant cycle surface; \textit{i.e.}\ the image of $\CH_0(F(Y))\to A$ is not of rank one. Indeed, otherwise the generator $c_Y$ would stay constant,
since $Y$ varies in the projective space $|\ko(1)|$, but as every line in $X$ is contained in some hyperplane section, this would imply the triviality of $A_4\oplus A_2$.

A stronger statement is in fact true. Namely, for the very general hyperplane section $Y$, 
the map $F^2\CH_0(F(Y))\to A_4$ is not trivial. 
Here is a sketch of the argument:\footref{note1}
The surfaces $F(Y)$ cover $F$ since
every line in $X$ is contained in some hyperplane section. In fact, the family of all $F(Y)$ can be seen as a $\PP^3$-bundle $\kf\to F$ with a projection
$\kf\to|\ko(1)|$. 
 Restricting to a general two-dimensional family $\pi\colon\kf_{\PP^2}\to\PP^2\subset|\ko(1)|$ gives
a fourfold with a dominant morphism  $\kf_{\PP^2}\to F$ inducing an injection $H^{4,0}(F)\,\hookrightarrow H^{4,0}(\kf_{\PP^2})$. However, over a dense open subset $U\subset \PP^2$, 
we have $\Omega^4_{\kf_{U}}\cong\pi^\ast\Omega_U^2\otimes\Omega_\pi^2$,
but if $F^2\CH_0(F(Y))\to A$ were zero, then the component in $\Omega_\pi^2$ would be trivial.

\begin{remark} Although  for the very general hyperplane section,  the surface $F(Y)\subset F$ is not a constant cycle surface,  special ones might be.
Imitating the study of constant cycle curves in ample linear systems on K3 surfaces, see \cite{Huyccc}, it should be interesting to study those in more detail.\footnote{Thanks to E.\ Sert\"oz for a related question.} In fact, Voisin \cite[Lemma~2.2]{V2} shows that the surface of lines contained in a hyperplane section
with five nodes is rational (and singular) and hence a constant cycle surface.
\end{remark}

\subsection{} Now let $X$ be general such that the surface $F'\subset F$ of lines of the second type is smooth. Then $F^2=F^1\subset \CH_0(F')$,
which the push-forward map sends to $F^2=A_4\oplus A_2\subset A$. 
Since $F'\subset F$ is not Lagrangian, see \cite[Section~6.4.4]{HuyCubic},
the projection to the graded part gives a non-trivial
map $F^2\to A_4\oplus A_2\twoheadrightarrow A_2$. In fact,
according to a result of Shen and Vial \cite[Proposition~19.5]{SV}, the surface $F'$ avoids 
$A_4$ and covers $A_2$. More precisely,
\begin{equation}\label{eqn:F2F}
{\Im}\left( \CH_0(F')\lra A\right)=A_2\oplus A_0\subset A.
\end{equation}
Again by Mumford's argument, using $p_g(F')>1$ and that $F'\subset F$ is not Lagrangian,  the kernel and image of the push-forward map are not representable. 
However, a geometric understanding of those cycles on $F'$ that
become rationally trivial on $F$ is not available. Also, unlike the case $\CH_0(F_{L_0})^-\subset\CH_0(F_{L_0})$ studied in this paper, there does
not seem to be any distinguished
subgroup of $\CH_0(F')$ that maps isomorphically onto~$A_2$.

As was observed by Shen and Vial
\cite[Theorem~3]{SV}, (\ref{eqn:F2F}) also implies $A_4\ne0$. Indeed, otherwise
$\CH_0(F)$ would be concentrated on the surface $F'$, which by Bloch--Srinivas \cite{BS} would contradict $H^{4,0}(F)\ne0$.

\begin{remark}
Observe that (\ref{eqn:F2F}) fits Voisin's description of $A_2\oplus A_0$ (and was in fact used for its proof). Indeed, the rational endomorphism 
$f\colon F\dashrightarrow F$ is resolved by a blow-up of $F$
in $F'\subset F$. Thus, for a line of the second type $L\in F'$, the class
$f_\ast[L]$ is realised by all points $L'$ in the image of the exceptional line 
of the blow-up $\tau\colon {\Bl}_{F'}(F)\to F$ over
$L$, which clearly satisfy $\dim {\rm O}_{L'}\geq 1$. Hence, by Voisin's description, we have $f_*[L]\in A_2\oplus A_0$ and, therefore, $[L]\in A_2\oplus A_0$. For the other inclusion observe that the image
 of the exceptional divisor of $\tau$ is an ample divisor in $F$ which therefore intersects any curve. Hence, every line $L'\in F$ with $\dim {\rm O}_{L'}\geq 1$, and
 so in particular $[L']\in A_2\oplus A_0$ according to Voisin, 
 gives rise to a class of the form $f_\ast[L]$ for some $L\in F'$.
 \end{remark}

\section{Lines intersecting a fixed line}
Let us now fix a general line $L_0\subset X$ and consider the surface
 $F_{L_0}\subset F=F(X)$ of lines intersecting $L_0$. 
 Then the quotient by the standard involution defines a morphism
 $\pi\colon F_{L_0}\twoheadrightarrow D_{L_0}$ onto a quintic surface $D_{L_0}\subset\PP^3$.
 The 16 fixed points, \textit{i.e.}\ the 16 lines $L\subset X$ with $\overline{LL_0}\cap X=2L\cup L_0$, give rise to 16 ordinary double points of $D_{L_0}$; see 
 the original \cite{VoisinGT} or the discussion in \cite[Section~6.4.5]{HuyCubic} or in
 \cite{H1}.

The involution $\iota$ acts on $\CH_0(F_{L_0})$, and we define
$$
\CH_0\left(F_{L_0}\right)^\pm\coloneqq\{\alpha\mid\iota^\ast\alpha=\pm\alpha\}\subset\CH_0\left(F_{L_0}\right).
$$
Then the pull-back map induces isomorphisms
$$
\CH_0\left(D_L\right)_{\hom}\longcongpf\CH_0\left(F_L\right)_{\hom}^+\quad\text{ and }\quad\CH_0\left(F_{L_0}\right)_{\hom}^-\cong \CH_0\left(F_{L_0}\right)_{\hom}/\CH_0(D_L)_{\hom}.
$$
Moreover, 
$\CH_0(F_{L_0})^-=\CH_0(F_{L_0})_{\hom}^-=\ker(\pi_\ast)={\Im}(1-\iota^\ast)$; see \cite[Section~4.1]{H1}.

Of course, $\CH_0(D_{L_0})/\CH_0(D_{L_0})_{\hom}\cong \ZZ$, but this isomorphism comes without a canonical split; \textit{i.e.}\ 
no analogue of the Beauville--Voisin class exists for the surface $D_{L_0}$. 
Theorem~\ref{thm:main}\eqref{thm:main-1} can be seen as a replacement: the image of $\CH_0(D_{L_0})$ in $A/A_4\cong A_2\oplus A_0$ is generated by a distinguished class depending on $L_0$.

\subsection{} Before entering the  proof of Theorem~\ref{thm:main}\eqref{thm:main-1},
we observe that the correspondence
$$D_{L_0}\longtwoheadleftarrow F_{L_0}\longhookrightarrow  F$$
induces the trivial map
$H^0(F,\Omega_F^2)\to H^0(D_{L_0},\Omega_{D_{L_0}}^2)$ since
the restriction of the holomorphic two-form on $F$ to $F_{L_0}$ is anti-invariant; see \cite[Section~2.3]{H1}.
In particular, according to the Bloch--Beilinson philosophy, one expects the map
$$
\CH_0\left(D_{L_0}\right)_{\hom}=\left(F^2/F^3\right)\left(D_{L_0}\right)\lra \left(F^2/F^3\right)(F)=A_2
$$
to be trivial. Equivalently, the composition of pull-back and push-forward should map the group of zero-cycles $\CH_0(D_{L_0})_{\hom}=\CH_0(F_{L_0})_{\hom}^+$  on $D_{L_0}$ to $F^3=A_4\subset A$. In this sense, the first
assertion Theorem~\ref{thm:main}\eqref{thm:main-1} can be seen as a confirmation
of the Bloch--Beilinson philosophy.

\begin{proof}[Proof of Theorem~\ref{thm:main}(\ref{thm:main-1})] A point $t\in D_{L_0}$ corresponds
to a pair of lines $L_1,L_2\subset X$ such that $L_0\cup L_1\cup L_2$ forms a triangle, \textit{i.e.}\ all three lines are contained in a single plane. 

Under push-forward, the class $[t]\in \CH_0(D_{L_0})$ of the point $t\in D_{L_0}$ is then mapped to $[L_1]+[L_2]\in A$
and, after composing further with $\psi\colon A\to \CH_2(F)$, $[L]\mapsto[F_L]$,  to
\begin{equation}\label{eqn:FL1L2}
\left[F_{L_1}\right]+\left[F_{L_2}\right]=\varphi\left(h^3\right)-\left[F_{L_0}\right].
\end{equation}
Here, $h$ is the hyperplane class on $X$, $h^3$ can be represented
by the plane $\PP^2=\overline{L_0L_1L_2}$, and $\varphi\colon\CH_1(X)\to \CH_2(F)$ is the Fano correspondence. As $\varphi(h^3)-[F_{L_0}]$ is independent of the point $t\in D_{L_0}$
and $\CH_0(D_{L_0})_{\hom}$ is spanned by classes of the form $[t]-[t']$, $t,t'\in D_{L_0}$, we find that the image of $\CH_0(D_{L_0})_{\hom}\to A$ is contained in the
kernel of $\psi$, which is $A_4$.

Next let us show that the image of the composition $\CH_0(D_{L_0})\to
A\to A/A_4\cong A_2\oplus A_0$ is spanned by the class $c_F(L_0)=2c_F-[L_0]_2$, where $[L_0]_2$ is the degree two part of
$$
[L_0]=[L_0]_4+[L_0]_2+[L_0]_0\in A_4\oplus A_2\oplus A_0.
$$
In order to prove this, observe that 
the image of $3c_F-([L_0]_2+[L_0]_0)=c_F(L_0)$ under
the injection $\bar\psi\colon A/A_4\,\hookrightarrow \CH_2(F)$
 can be computed by means of (\ref{eqn:F2ndSV}) as
 $$
 \psi\colon A\lra \CH_2(F),\quad c_F(L_0)\longmapsto  g^2-{\rm c}_2(\ks_F)-\left[F_{L_0}\right].
 $$
The latter
equals $\varphi(h^3)-[F_{L_0}]$ by \cite[Proposition~6.4.1]{HuyCubic}, and to conclude
we apply (\ref{eqn:FL1L2}), which proves $\varphi(h^3)-[F_{L_0}]=[F_{L}]+[F_{\iota(L)}]$ for any $L\in F_{L_0}$. Hence, the classes
$c_F(L_0)$ and $[L]+[\iota(L)]$ modulo $A_4$ have the same image under the injection $\bar\psi$. Therefore, the image of
$$
\ZZ\cong\frac{\CH_0\left(D_{L_0}\right)}{\CH_0\left(D_{L_0}\right)_{\hom}}\lra A/A_4\cong A_2\oplus A_0
$$
is indeed $\ZZ\cdot c_F(L_0)$.\medskip

Before proving the rest of Theorem~\ref{thm:main}\eqref{thm:main-1}, we note the following.

\begin{claim} Assume $\rho(F)>1$. Then the class  $c_F(L_0)$ is contained in the image of\, $\CH_0(D_{L_0})\to A$.
  \end{claim}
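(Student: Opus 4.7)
My plan is to reduce the claim to an $A_4$-membership question and then exploit the extra Picard class afforded by $\rho(F)>1$ to produce the needed element.

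First I would fix any point $t_0\in D_{L_0}$ corresponding to a triangle $(L_0,L_1,L_2)$, so that its push-forward in $A$ is $\beta_{t_0}=[L_1]+[L_2]$. Since $\CH_0(D_{L_0})=\ZZ\cdot[t_0]\oplus\CH_0(D_{L_0})_{\hom}$, the image $\Gamma:=\Im(\CH_0(D_{L_0})\to A)$ equals $\ZZ\beta_{t_0}+\Gamma_0$, where $\Gamma_0:=\Im(\CH_0(D_{L_0})_{\hom}\to A)\subset A_4$ by the first part of Theorem~\ref{thm:main}\eqref{thm:main-1} already proved above. Since $\beta_{t_0}$ reduces to $c_F(L_0)$ modulo $A_4$, the claim $c_F(L_0)\in\Gamma$ is equivalent to the membership $\beta_{t_0}-c_F(L_0)\in\Gamma_0$. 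Using $[L_0]_0=c_F$ together with $c_F(L_0)=3c_F-[L_0]+[L_0]_4$, the obstruction unpacks as
\[
\beta_{t_0}-c_F(L_0)=\bigl([L_0]+[L_1]+[L_2]-3c_F\bigr)-[L_0]_4 \in A_4,
\]
where the first summand is a triangle class minus $3c_F$ (in $A_4$ by \eqref{eqn:Defi1}) and the second summand is $[L_0]_4$.

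Next I would invoke the hypothesis $\rho(F)>1$: choose a divisor class $\lambda\in\mathrm{Pic}(F)$ linearly independent from the Plücker class $g$. Its restriction to $F_{L_0}$ splits into $\iota$-eigencomponents, and the $\iota$-invariant part descends to a divisor $\bar\lambda\in\mathrm{Pic}(D_{L_0})$ not coming from the hyperplane class of $D_{L_0}\subset\PP^3$. Self-intersections of $\bar\lambda$ and intersections of $\bar\lambda$ with the hyperplane class produce new homologically trivial zero-cycles on $D_{L_0}$ whose push-forwards to $A$ can be evaluated by combining Shen--Vial's identities \eqref{eqn:F1stSV}--\eqref{eqn:F2ndSV} with the Fano correspondence $\varphi$ of \eqref{eqn:CH1X}. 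The objective is to express the obstruction above as the push-forward of a suitable $\ZZ$-linear combination of these new classes together with triangle-difference classes $\beta_t-\beta_{t'}\in\Gamma_0$.

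The main obstacle will be the explicit Chow-theoretic bookkeeping: verifying that the $A_4$-component of the push-forward of the $\bar\lambda$-products, suitably corrected by triangle differences, contains the specific class $[L_0]_4-\bigl([L_0]+[L_1]+[L_2]-3c_F\bigr)$ for some triangle $(L_0,L_1,L_2)$ through $L_0$. This step is where the hypothesis $\rho(F)>1$ is indispensable: the subgroup $\Gamma_0\subset A_4$ generated by triangle differences alone is too small to realize $[L_0]_4$, and the extra algebraic class $\bar\lambda$ supplies precisely the missing relation, via intersection products whose push-forward to $A$ interacts nontrivially with the Bloch--Beilinson decomposition of $[L_0]$.
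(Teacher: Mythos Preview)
Your reduction in the first two paragraphs is correct: showing $c_F(L_0)\in\Gamma$ is equivalent to showing that the $A_4$-class $\beta_{t_0}-c_F(L_0)=([L_0]+[L_1]+[L_2]-3c_F)-[L_0]_4$ lies in $\Gamma_0$. The problem is the next step. You assert that for $\lambda\in\mathrm{Pic}(F)$ independent of $g$, the $\iota$-invariant part of $\lambda|_{F_{L_0}}$ descends to a divisor $\bar\lambda\in\mathrm{Pic}(D_{L_0})$ \emph{not} proportional to the hyperplane class. This is false. By \eqref{eqn:oldPicresult} the restriction of any primitive class $\alpha\in\CH^1(F)_{\pr}$ to $F_{L_0}$ lands in $\CH^1(F_{L_0})^-_{\pr}$, i.e.\ is purely anti-invariant. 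Writing $\lambda=ag+\alpha$ with $\alpha$ primitive, the $\iota$-invariant component of $\lambda|_{F_{L_0}}$ is therefore just $a$ times the invariant component of $g|_{F_{L_0}}$; the new class $\alpha$ contributes nothing. So $\bar\lambda$ is always a multiple of the single fixed class coming from $g$, and you gain no new divisor on $D_{L_0}$. Everything you build on $\bar\lambda$ from that point on collapses, and your final paragraph already concedes that the decisive computation was never carried out.

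The paper exploits $\rho(F)>1$ in exactly the opposite way. A nonzero primitive $\alpha$ restricts to an \emph{anti}-invariant class on $F_{L_0}$, so its \emph{square} $\eta=(\alpha|_{F_{L_0}})^2$ is $\iota$-invariant and hence comes from $\CH_0(D_{L_0})$. The crucial input is Lemma~\ref{lem:A2squares}, which shows that the push-forward of $\eta$ to $A$, namely $\alpha^2\cdot[F_{L_0}]$, lies in $A_2\oplus A_0$. Since the image of $\CH_0(D_{L_0})$ in $A/A_4\cong A_2\oplus A_0$ is $\ZZ\cdot c_F(L_0)$, this forces the push-forward of $\eta$ to be a multiple of $c_F(L_0)$ \emph{on the nose}, not merely modulo $A_4$. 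Thus the hypothesis $\rho(F)>1$ is used to produce an invariant zero-cycle on $F_{L_0}$ via the square of an anti-invariant divisor, not to produce an invariant divisor.
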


\begin{proof}
It suffices to prove the existence of an invariant class $\eta\in \CH_0(D_{L_0})^+$ with a non-trivial push-forward contained in $A_2\oplus A_0$. By Lemma~\ref{lem:A2squares},
the proof of which is independent of the rest of the discussion here, this holds
for the restriction of the square $\eta=(\alpha|_{F_{L_0}})^2$ of any primitive class $0\ne\alpha\in \CH^1(F)_{\pr}$.
\end{proof}

It remains to prove that $\CH_0(D_{L_0})_{\hom}\to A_4$ is not zero
for the very general $X$ and the very general line $L_0\subset X$. Clearly,
it suffices to show this for one cubic, but using the above claim, we will in fact show it for all cubics with $\rho(F)>1$.

So we assume $\rho(F)>1$ and suppose that for the very general choice of $L_0$ and hence for all $L_0$, the map $\CH_0(D_{L_0})_{\hom}\to A_4$ is zero. 
Then, using that $\CH_0(D_{L_0})\to A/A_4$ has rank one and the
fact that by the claim $c_F(L_0)\in A_2\oplus A_0$ is contained in the image of the push-forward map,
 all of $\CH_0(D_{L_0})$ would map to $A_2\oplus A_0$ (with its image generated by $c_F(L_0)$). Now consider a fixed point
$L\in F_{L_0}$ of the covering involution $F_{L_0}\to D_{L_0}$, \textit{i.e.}\ a line $L\subset X$ such that
\begin{equation}\label{eqn:L0L1}
2L\cup L_0=\PP^2\cap X
\end{equation} for some plane $\PP^2\subset\PP^5$. Then $2[L]\in \CH_0(D_{L_0})\subset \CH_0(F_{L_0})^+$, and for $L$
as a point in $F$, one then has $[L]\in A_2\oplus A_0$. However, for
every line $L\in F$, there exists a line $L_0$ satisfying (\ref{eqn:L0L1}), and
for $L$ general $L_0$ also is general (and so the surface $F_{L_0}$ is smooth). Hence,  the triviality of $\CH_0(D_{L_0})_{\hom}\to A_4$ for the general $L_0$ would prove that all points in $F$ define classes in $A_2\oplus A_0$, \textit{i.e.}\ $A_4=0$, which is absurd.
\end{proof}

\begin{remark}
The last part of the argument also shows  that in general the class of a fixed point $L\in F_{L_0}$ of the covering involution $\iota$ is not mapped to the distinguished class $(1/2)c_F(L_0)\in A_2\oplus A_0\subset A_4\oplus A_2\oplus A_0$. Hence, at least for generic choices, none of the 16 fixed points of $\iota$ is a candidate for the Beauville--Voisin class $c_{L_0}\in \CH_0(F_{L_0})^+$ in Remark~\ref{rem:Intro}; see also Remark~\ref{rem:final}.
\end{remark}

\begin{remark}
The Beauville--Voisin class on a K3 surface is by definition of degree one, while
the analogue $c_F(L_0)\in A_2\oplus A_0$ is of degree two. There are two reasons for this. First, the projection $F_{L_0}\to D_{L_0}$ is of degree two, and thus
the push-forward of any class on $F_{L_0}$ that is pulled back from $D_{L_0}$
has even degree. Second, the intersection pairing $(\alpha_1|_{F_{L_0}}.\alpha_2|_{F_{L_0}})$ of any two primitive classes $\alpha_1,\alpha_2\in\CH^1(F)_{\pr}$ is even; see \cite[Corollary~1.7]{H1}.
\end{remark}

\subsection{}
The proof of Theorem~\ref{thm:main}\eqref{thm:main-2} starts with the
following key technical lemma, which is based on an excess
intersection computation.

\begin{lem}\label{lem:Key} We consider the composition 
  $$
  \beta\colon \CH_0\left(F_{L_0}\right)\lra \CH_0(F)\stackrel{\psi}{\lra} \CH_2(F)\stackrel{\res}{\lra} \CH_0\left(F_{L_0}\right)
  $$
  of the push-forward map $\CH_0(F_{L_0})^-\to\CH_0(F)$, the map
$\psi\colon\CH_0(F)\to\CH_2(F)$, $[L]\mapsto[F_L]$, and the restriction map
${\res}\colon \CH_2(F)\to\CH_0(F_{L_0})$. Then,  $\beta=-2\cdot{\id}$.
\end{lem}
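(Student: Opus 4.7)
The plan is to compute $\beta([L])=i^*[F_L]$ via the refined Gysin pullback for the codimension-two regular embedding $i\colon F_{L_0}\hookrightarrow F$, for a general $L\in F_{L_0}$. Setting $p=L\cap L_0\in X$ and letting $\iota(L)$ be the residual line of the triangle $L\cup L_0\cup\iota(L)$ in the plane $\overline{LL_0}$, the principal difficulty is that $F_L\cap F_{L_0}$ is not of the expected dimension zero.

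First I would identify the intersection. A line $M\subset X$ meets both $L$ and $L_0$ iff either $M$ passes through $p$---giving the curve $C_p\subset F$ of lines in $X$ through $p$---or $M$ lies in $\overline{LL_0}$, in which case $M$ equals $L$, $L_0$, or $\iota(L)$. Since $L,L_0\in C_p$ and generically $\iota(L)\notin C_p$, one has $F_L\cap F_{L_0}=C_p\cup\{\iota(L)\}$ with $C_p$ an excess component of dimension one.

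Fulton's excess intersection formula then gives
$$\beta([L])=[\iota(L)]+\bigl(c_1(N_{F_{L_0}/F}|_{C_p})-c_1(N_{C_p/F_L})\bigr)\cap[C_p]\in\CH_0(F_{L_0}),$$
and, using the normal bundle sequence $0\to N_{C_p/F_{L_0}}\to N_{C_p/F}\to N_{F_{L_0}/F}|_{C_p}\to 0$, this simplifies to $[\iota(L)]+\xi_L$ with
$$\xi_L=\bigl(c_1(N_{C_p/F})-c_1(N_{C_p/F_{L_0}})-c_1(N_{C_p/F_L})\bigr)\cap[C_p].$$
The class $c_1(N_{C_p/F})$ is identified with the restriction of the Pl\"ucker polarisation $g$ via the realisation of $C_p$ as a sextic $(2,3)$-complete intersection in $\PP T_pX\cong\PP^3$. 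The remaining two normal bundles are computed from the pencils $F_{L_0}\dashrightarrow L_0$ and $F_L\dashrightarrow L$, whose fibres are the curves $C_{p'}$ and which have common base points at $L_0\in F_{L_0}$ and $L\in F_L$ respectively, requiring a blow-up resolution to extract a degree-one twist.

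To conclude, I would restrict to an anti-invariant element $\alpha=[L]-[\iota(L)]\in\CH_0(F_{L_0})^-$, obtaining
$$\beta(\alpha)=[\iota(L)]-[L]+\xi_L-\xi_{\iota(L)},$$
and verify the identity $\xi_L-\xi_{\iota(L)}=[\iota(L)]-[L]$ in $\CH_0(F_{L_0})$ from the Chern class descriptions above together with the symmetric role of $L$ and $\iota(L)$ in the triangle $L\cup L_0\cup\iota(L)$. This yields $\beta(\alpha)=-2\alpha$. As a sanity check, the total degree $\deg\beta([L])=1+\deg(\xi_L)$ should reproduce the Shen--Vial intersection number $\deg([F_L]\cdot[F_{L_0}])=\deg(6c_F+[\iota(L)]-[L]-[L_0])=5$. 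The main obstacle is the base-point analysis: without the degree-one correction coming from the blow-ups of $L_0$ and $L$, one would obtain the wrong numerical factor in place of $-2$.
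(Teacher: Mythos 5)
Your proposal is correct and follows essentially the same route as the paper: identify $F_L\cap F_{L_0}$ as the residual point $\iota(L)$ plus the excess curve $C_p$ of lines through $p=L\cap L_0$, apply Fulton's excess intersection formula, compute the excess class as $g\cdot C_p-[L_0]-[L]$ (you via the normal bundle sequence and the $(2,3)$ complete intersection description of $C_p$, the paper via $\omega_F^\ast\otimes\omega_{F_L}\otimes\omega_{F_{L_0}}\otimes\omega_{C_p}^\ast$ and the identity $\ko_F(1)|_{F_{L_j}}\cong\pi_j^\ast\ko(1)\otimes\ko(C_p)$), and then cancel the $g\cdot C_p$ terms using the linear equivalence of the fibres of the pencil $F_{L_0}\dashrightarrow L_0$. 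The two computations of the excess line bundle are equivalent, and your degree-$5$ sanity check against the Shen--Vial formula matches the paper's cross-check of the pushed-forward identity.
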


\begin{proof} The group $\CH_0(F_{L_0})^-$ is generated by
classes of the form $[L_1]-[L_2]$, where  $L_1\in F_{L_0}$ is any point and $L_2=\iota(L_1)$ is its image
under the covering involution of $F_{L_0}\to D_{L_0}$.
The map $\beta$ sends such a class to the restriction $([F_{L_1}]-[F_{L_2}])|_{F_{L_0}}$. 

First observe that the push-forward to $A$
turns $([F_{L_1}]-[F_{L_2}])|_{F_{L_0}}$ into $([F_{L_1}]-[F_{L_2}])\cdot [F_{L_0}]$, which by (\ref{eqn:F1stSV})
is indeed $-2([L_1]-[L_2])\in A$. We need to show that this equality holds before pushing forward, \textit{i.e.}\ $([F_{L_1}]-[F_{L_2}])|_{F_{L_0}}=-2([L_1]-[L_2])\in \CH_0(F_{L_0})^-$, and for this we have to find a geometric
interpretation of $[F_{L_i}]|_{F_{L_0}}$. 

The intersection $F_{L_1}\cap F_{L_0}$ is the set of all lines simultaneously intersecting $L_1$ and $L_0$. Hence,
$$
F_{L_1}\cap F_{L_0}=\{L_2\}\sqcup C_{1}.
$$
Here, $C_{1}\coloneqq\{L\mid x_1\in L\}$, with $x_1$ the point of intersection of $L_1$ and $L_0$. A similar statement holds for $F_{L_2}\cap F_{{L_0}}$. Hence,
$$
\left(\left[F_{L_1}\right]-\left[F_{L_2}\right]\right)|_{F_{L_0}}=[L_2]-[L_1]+E_1-E_2,
  $$
where $E_1$ and $E_2$ are the contributions from the excess intersections $C_{1}$ and $C_{2}$. We will show that $E_1-E_2=[L_2]-[L_1]\in\CH_0(F_{L_0})$.

To compute $E_1,E_2$ we recall that the excess
intersection $[S]|_{T}\in \CH_0(S_2)$ of two smooth surfaces $S,T\subset F$ intersecting in a smooth curve $C=S\cap T$ is the push-forward of
${\rm c}_1(\ke)$, where $\ke$ is the line bundle $\kn_{S/F}|_C/\kn_{C/T}\cong
\kn_{T/F}|_C/\kn_{C/S}
$; \textit{cf.}  \cite[Theorem~9.2]{Fulton} or \cite[Section~13.3]{EH}.
In other words, $\ke$ is part of a commutative diagram of short exact sequences
$$\xymatrix@R=18pt@C=18pt@M=8pt{\kn_{C/T}\ar@{^(..>}[r]&\kn_{S/F}|_C\ar@{->>}[r]&\ke\\
\kt_T|_C\ar@{^(->}[r]\ar@{->>}[u]&\kt_F|_C\ar@{->>}[r]\ar@{->>}[u]&\kn_{T/F}|_C\ar@{->>}[u]\\
\kt_C\ar@{^(->}[r]\ar@{^(->}[u]&\kt_S|_C\ar@{->>}[r]\ar@{^(->}[u]&\kn_{C/S}\rlap{.}\ar@{^(..>}[u]
}$$
Hence, as a line bundle on $C$, one has 
\begin{eqnarray*}
\ke&\cong&\det(\kn_{S/F})|_C\otimes\kn_{C/T}^\ast\\
&\cong&\omega_F^\ast|_C\otimes\omega_S|_C\otimes\omega_T|_C\otimes\omega_C^\ast.
\end{eqnarray*}
In our situation, we let $S=F_{L_i}$ and $T=F_{L_0}$. Their canonical bundles 
are $\pi_i^\ast\ko(1)$ and $\pi_0^\ast\ko(1)$, where the $\pi_i\colon F_{L_i}\to D_{L_i}\subset\PP^3$, $i=0,1,2$, denote
the projections. Hence, the excess contribution for the intersection
$[F_{L_i}]|_{F_{L_0}}$ comes from the line bundle 
\begin{equation}\label{eqn:Ei}
\ke_i\cong\pi_i^\ast\ko(1)|_{C_{i}}\otimes\pi_0^\ast\ko(1)|_{C_{i}}\otimes\omega_{C_{i}}^\ast.
\end{equation}

\begin{claim}If\, $\ko_F(1)$ denotes the Pl\"ucker polarisation on $F$  and the two lines $L_0,L_i$ are considered as points in $C_i$, then $$\ke_i\cong \ko_F(2)|_{C_i}\otimes \ko_{C_i}(-L_0-L_i)\otimes\omega_C^\ast.$$
\end{claim}
  
\begin{proof}To prove the claim it suffices to show that $\pi_j^\ast\ko(1)|_{C_i}\cong\ko_F(1)|_{C_i}\otimes\ko_{C_i}(-L_j)$  for $j=0$ and $j=i$. 
It is known that $\ko_F(1)|_{F_{L_j}}\cong\pi_j^\ast\ko(1)\otimes\ko_{F_j}(C_i)$;  see
the original \cite[Section~3, Lemma~2]{VoisinGT} or 
\cite[Equation~(4.5) in Remark~6.4.13]{HuyCubic}.
Thus, we need to prove that $\ko_{F_{L_j}}(C_i)|_{C_i}\cong\ko_{C_i}(L_j)$. To see this, observe that the
blow-up of $F_{L_j}$ in the point $L_j$ comes with the projection $q\colon {\Bl}_L(F_{L_j})\to L_j$, so that $C_i$ is the fibre over $x_i\in L_j$. In particular, the linear equivalence of $\ko_{F_{L_j}}(C_i)$ is independent of the point $x_i\in L_j$. As for two distinct points $x\ne x'\in L_j$, the two curves $C_x$ and $C_{x'}$
only intersect in the point  $L_j\in F_{L_j}$ and do so transversally, this concludes the proof of the claim.\end{proof}

The claim immediately shows $E_1-E_2=[L_2]-[L_1] \in \CH_0(F_{L_0})$, 
which concludes the proof of the lemma.
\end{proof}

\begin{remark}\label{rem:Proofrefined}
With the notation in the above proof, we have actually shown that
$$
\psi([L_1])|_{F_{L_0}}=[L_2]+E_1=[L_2]+2g\cdot C_1-[L_0]-[L_1]-[\omega_{C_1}].
$$
Since $\omega_{C_1}\cong(\omega_{F_{L_0}}\otimes\ko(C_1))|_{C_1}\cong(\pi_0^\ast\ko(1)\otimes\ko(C_1))|_{C_1}\cong\ko_F(1)|_{C_1}$, this gives
$$
\psi([L_1]+[L_2])|_{F_{L_0}}=2(g\cdot C-[L_0])\in \CH_0\left(F_{L_0}\right),
$$
where $C\subset F_{L_0}$ is any curve linearly equivalent to $C_1\subset F_{L_0}$ or, equivalently, to $C_2\subset F_{L_0}$.
\end{remark}

\begin{cor} The push-forward map
  $$
  \CH_0\left(F_{L_0}\right)^-\,\longhookrightarrow A
  $$
and its composition with the projection $A\twoheadrightarrow A/A_4\cong A_2\oplus A_0$
\begin{equation}\label{eqn:FA2inj}
\CH_0\left(F_{L_0}\right)^-\,\longhookrightarrow A\longtwoheadrightarrow A/A_4
\end{equation} are both injective. \qed
\end{cor}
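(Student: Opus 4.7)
The plan is very short: both injectivity statements follow immediately from Lemma~\ref{lem:Key} together with the description $A_4=\ker(\psi)$ in (\ref{eqn:Defi1}), plus a torsion-freeness input. In particular, since injectivity of the composition with $A\twoheadrightarrow A/A_4$ implies injectivity of the push-forward itself, it is enough to prove the second (stronger) assertion.

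So suppose $\alpha\in\CH_0(F_{L_0})^-$ has push-forward lying in $A_4$. Because $A_4=\ker(\psi)$, applying $\psi$ to the push-forward of $\alpha$ yields the zero class in $\CH_2(F)$, and its restriction to $F_{L_0}$ is therefore also zero. But by construction this restriction is nothing other than $\beta(\alpha)$, so Lemma~\ref{lem:Key} gives $-2\alpha=0$ in $\CH_0(F_{L_0})^-$.

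To conclude that $\alpha=0$, one only needs that $\CH_0(F_{L_0})^-$ is torsion-free. This is automatic: by construction $\CH_0(F_{L_0})^-\subset \CH_0(F_{L_0})_{\hom}$, and since $q(F_{L_0})=0$ (so that $\Alb(F_{L_0})=0$), Roitman's theorem on torsion of zero-cycles on the smooth projective surface $F_{L_0}$ shows that $\CH_0(F_{L_0})_{\hom}$ has no torsion. Hence $\alpha=0$, and both claimed injectivities follow. There is no real obstacle here once Lemma~\ref{lem:Key} has been established; the whole content of the corollary is that Lemma~\ref{lem:Key} produces a left inverse (up to the factor $-2$) to the push-forward, and the price paid is the mild ``divide by $2$'' step that is absorbed by Roitman's theorem.
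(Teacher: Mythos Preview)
Your argument is correct and is exactly the reasoning the paper intends: the corollary carries a \qed with no written proof, since it is an immediate consequence of Lemma~\ref{lem:Key} (which yields a left inverse up to the factor $-2$) together with $A_4=\ker(\psi)$. Your handling of the residual $2$-torsion issue via Roitman's theorem and $q(F_{L_0})=0$ (equivalently $\pi_1(F_{L_0})=\{1\}$, as recorded in the paper) is the right way to make the implicit step explicit.
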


\begin{proof}[Proof of Theorem~\ref{thm:main}(\ref{thm:main-2})] 
It remains to prove that for a very general  line $L_0$, the image of the push-forward
$\CH_0(F_{L_0})^-\to A$ is not contained in $A_2$ and that the composition (\ref{eqn:FA2inj}) induces an isomorphism $\CH_0(F_{L_0})^-\congpf A_2\subset A/A_4$.\

For the first assertion, we choose a line $L_0$ such that for the decomposition 
$$
[L_0]=[L_0]_4+[L_0]_2+[L_0]_0\in A_4\oplus A_2\oplus A_0,
$$
we have $[L_0]_4\ne0$. Then
we view  $L_0$ as a point in $F_{L_0}$ and consider its image $\iota(L_0)\in F_{L_0}$ under the involution. Their difference defines a class
$[\iota(L_0)]-[L_0]\in \CH_0(F_{L_0})^-$. By the definition of the Voisin endomorphism  $f\colon F\dashrightarrow F$, the push-forward
of this class is $f_\ast[L_0]-[L_0]\in A$, which by the definitions of $A_4$, $A_2$, and $A_0$ equals
$3([L_0]_4-[L_0]_2)$. This proves that the push-forward of $[\iota(L_0)]-[L_0]\in \CH_0(F_{L_0})^-$ is not contained in $A_2\oplus A_0$.

The surjectivity of the composition (\ref{eqn:FA2inj}) follows from (\ref{eqn:CH1X}), showing
$A_2\congpf \CH_1(X)_{\hom}$ via the Fano correspondence, and the
isomorphism $\CH_0(F_{L_0})^-\congpf\CH_1(X)_{\hom}$, obtained
as the composition of the push-forward and the Fano correspondence; \textit{cf.}  \cite[Theorem~0.4]{H1}.
\end{proof}

\begin{remark}\label{rem:ref}\leavevmode
  \begin{enumerate}
    \item\label{rem:ref-1} Since the  image of the other projection 
\begin{equation}\label{eqn:projA4}
\CH_0\left(F_{L_0}\right)^-\,\longhookrightarrow A\longtwoheadrightarrow A_4
\end{equation}
contains the component $[L_0]_4\in A_4$ and since $A_4$ is not generated by points in a single surface, the image of (\ref{eqn:projA4}) does indeed depend on $L_0$.
It is natural to wonder whether the composition (\ref{eqn:projA4}), similarly to $\CH_0(D_{L_0})_{\hom}\to A_4$ in Remark~\ref{rem:Intro}, is actually injective for general $L_0$.

\item\label{rem:ref-2} As pointed out by Charles Vial, for $[L_0]=c_F$ the map (\ref{eqn:projA4}) is actually trivial and in particular not injective. To prove this one first notices that for $[L_0]=c_F$, the endomorphism $\delta\colon\CH_0(F)\to\CH_0(F)$,
$\alpha\mapsto[F_{L_0}]\cdot \psi(\alpha)=(1/3)(g^2-{\rm c}_2(\ks_F))\cdot \psi(\alpha)$ preserves $A_2$. Using the notation of \cite{SV}, this follows from $g,{\rm c}_2(\ks_F)\in\CH^2(F)_0$, see \cite[Theorem~21.9]{SV},
$\CH^2(F)_2=\psi(A_4\oplus A_2)=\psi(A_2)$, see \cite[Proposition~21.10]{SV},
and $\CH^2(F)_0\cdot\CH^2(F)_2=A_2$, see \cite[Proposition~22.3]{SV}.
Then by Lemma~\ref{lem:Key},  $\delta$ maps the image
$\alpha_4+\alpha_2\in A_4\oplus A_2$ of a class in $\CH_0(F_{L_0})^-$
 to $-2\cdot(\alpha_4+\alpha_2)$. Since $\delta(\alpha_4+\alpha_2)=\delta(\alpha_2)\in A_2$, this implies $\alpha_4=0$.

 In fact, by Lemma~\ref{lem:Key} and the above proof of Theorem~\ref{thm:main}\eqref{thm:main-2}, the push-forward then defines an isomorphism
$\CH_0(F_{L_0})^-\congpf A_2$, and $\delta|_{A_2}=-2\cdot{\id}$. Note that this shows in particular that
$A_2$ is spanned by classes $[L_1]-[L_2]$ forming a triangle with a fixed line $L_0$ with $[L_0]=c_F$.

\item\label{rem:ref-3} Following a suggestion of Charles Vial, see also \cite[Theorem~20.2(ii)]{SV}, we observe that $A_4$ is spanned by
the images of all the maps (\ref{eqn:projA4}) for varying $L_0$. Indeed, since $A_4\oplus A_2$ is spanned by classes of the form $[L_1]-[L_2]$ and since
for any two lines $L_1,L_2$, there exists a line $L$ intersecting both, by writing
$[L_1]-[L_2]=([L_1]-[L])-([L_2]-[L])$ one reduce to the case that
$L_1,L_2$ are planar. In this case, if $L_0$ denotes the residual line of $L_1\cup L_2\subset \overline{L_1L_2}\cap X$, then $L_1=\iota(L_2)$ and,
therefore, $([L_1]-[L_2])_4$ is contained in the image of (\ref{eqn:projA4}).
\end{enumerate}
  \end{remark}

\subsection{} As the first step towards a proof of Theorem~\ref{thm:main2},
we recall from \cite[Theorem~0.2]{H1}, see also \cite[Theorem~3]{Izadi} and \cite[Theorem~4.7 and Corollary~4.8]{Shen}, that the restriction of line bundles induces
an isomorphism
\begin{equation}\label{eqn:oldPicresult}
\CH^1(F)_{\pr}\longcongpf\CH^1\left(F_{L_0}\right)^-_{\pr}.
\end{equation}
Here, the primitive parts on the two sides are defined with respect to the Pl\"ucker polarisation
$\ko_F(1)$  on $F$ and its restriction to $F_{L_0}$. Thus, to establish
Theorem~\ref{thm:main2}, it suffices to show that
for any $\alpha\in\CH^1(F)_{\pr}$, the class $\alpha^2\cdot[F_{L_0}]$
is a multiple of $c_F(L_0)=2c_F-[L_0]_2$.

The next step can be seen as a warm-up. The result was used already in the proof of Theorem~\ref{thm:main}\eqref{thm:main-1}.

\begin{lem}\label{lem:A2squares}
For any class $\alpha\in \CH^1(F)$, the push-forward
of\, $(\alpha|_{F_{L_0}})^2\in \CH_0(F_{L_0})$ to $F$ is a class in $A_2\oplus A_0$.
\end{lem}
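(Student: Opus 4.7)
The plan is to use the projection formula and then decompose $[F_{L_0}]$ into two pieces, each of which can be shown to behave well under multiplication by $\alpha^2$. By the projection formula applied to the closed embedding $F_{L_0}\hookrightarrow F$, the push-forward of $(\alpha|_{F_{L_0}})^2$ equals the intersection product $\alpha^2\cdot [F_{L_0}]\in A$, so the task is to prove that $\alpha^2\cdot[F_{L_0}]\in A_2\oplus A_0$ for every $\alpha\in\CH^1(F)$.

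The first step is to split $[F_{L_0}]=\psi(c_F)+\gamma$, where $\gamma\coloneqq\psi([L_0]-c_F)=[F_{L_0}]-\psi(c_F)$. Since both $[L_0]$ and $c_F$ have degree one in $A$, the class $[L_0]-c_F\in A_{\hom}$, so $\gamma$ lies in $\Im(\psi)_{\hom}=\CH^2(F)_2$ by the equality cited in the excerpt from \cite[Section~21]{SV}. Then property (\ref{eqn:gamma}) (\emph{i.e.}\ $\CH^1(F)_0^{\cdot 2}\cdot\CH^2(F)_2\subset A_2$) directly gives $\alpha^2\cdot\gamma\in A_2$.

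For the remaining piece, I would invoke the explicit formula (\ref{eqn:F2ndSV}), which expresses $\psi(c_F)$ as a rational multiple of $5g^2+\mathrm{c}_2(F)$. Consequently, $\alpha^2\cdot\psi(c_F)$ is a degree four polynomial expression in the divisor classes $\alpha,g\in\CH^1(F)$ and in $\mathrm{c}_2(F)$. Voisin's theorem \cite[Theorem~0.4]{V2}, recalled in the excerpt, precisely guarantees that any such expression is a (rational) multiple of the Beauville--Voisin class $c_F\in A_0$. Adding the two contributions yields $\alpha^2\cdot[F_{L_0}]\in A_2\oplus A_0$, which completes the proof.

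No serious obstacle is expected here: the argument is a rearrangement of results already compiled in Section~\ref{sec:BBFano}, and the only mildly delicate point is observing that $[F_{L_0}]$ differs from $\psi(c_F)$ by a class that is automatically homologically trivial (hence lands in $\CH^2(F)_2$) because $[L_0]$ and $c_F$ share the same degree in $A$.
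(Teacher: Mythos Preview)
Your proof is correct and follows essentially the same route as the paper: both reduce to $\alpha^2\cdot[F_{L_0}]\in A_2\oplus A_0$, split $[F_{L_0}]$ as $\psi(c_F)$ plus the homologically trivial class $\gamma=[F_{L_0}]-\psi(c_F)\in\Im(\psi)_{\hom}$, apply (\ref{eqn:gamma}) to get $\alpha^2\cdot\gamma\in A_2$, and then use (\ref{eqn:F2ndSV}) together with Voisin's theorem \cite[Theorem~0.4]{V2} to conclude $\alpha^2\cdot\psi(c_F)\in A_0$. The only cosmetic difference is that the paper uses the expression $g^2-{\rm c}_2(\ks_F)$ for $3\psi(c_F)$ (and then observes ${\rm c}_2(\ks_F)$ is a combination of $g^2$ and ${\rm c}_2(F)$), whereas you invoke the equivalent form $(1/8)(5g^2+{\rm c}_2(F))$ directly.
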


\begin{proof}
The assertion is equivalent to $\alpha^2\cdot[F_{L_0}]\in A_2\oplus A_0$.
To prove this, recall that the two classes $\psi([L_0])=[F_{L_0}]$ and
$\psi(c_F)=(1/3)(g^2-{\rm c}_2(\ks_F))$ in $\CH^2(F)$ are homologically equivalent; \textit{cf.}  \cite[Proposition~6.4.1]{HuyCubic}.

Hence, $[F_{L_0}]-\psi(c_F)$ is contained in the homologically trivial
part of ${\Im}(\psi)$. Therefore, by (\ref{eqn:gamma}) we have $\alpha^2\cdot([F_{L_0}]-\psi(c_F))\in A_2$. Now, we apply 
a result of Voisin \cite[Theorem~0.4]{V2} and use
the well-known fact that ${\rm c}_2(\ks_F)$ is a linear combination of ${\rm c}_2(\kt_F)$ and $g^2$, \textit{cf.}  \cite[Proposition~6.4.1]{HuyCubic}, to conclude that 
 $3\alpha^2\cdot\psi(c_F)=\alpha^2\cdot(g^2-{\rm c}_2(\ks_F))\in A_0$.
Hence,  $\alpha^2\cdot [F_{L_0}]\in A_2\oplus A_0$.
\end{proof}

\begin{proof}[Proof of Theorem~\ref{thm:main2}] 
We apply Lemma~\ref{lem:A2squares} to an arbitrary anti-invariant primitive class $\delta\in \CH^1(F_{L_0})^-_{\pr}$, which by (\ref{eqn:oldPicresult}) can be written as $\alpha|_{F_{L_0}}$ for a unique primitive class $\alpha$ on $F$. Then the push-forward of $\delta^2\in \CH^2(F_{L_0})^+$ is contained in $A_2\oplus A_0$. 
As by Theorem~\ref{thm:main}\eqref{thm:main-1} the image of the composition $\CH^2(D_{L_0})\to A\twoheadrightarrow A/A_4=A_2\oplus A_0$ is spanned by $c_F(L_0)$, this
shows that the push-forwards of $\delta^2$ and $c_F(L_0)$ are proportional.

To conclude, note that all numerical intersection products $(\alpha_1|{F_{L_0}}.
\alpha_2|{F_{L_0}})$ for primitive classes $\alpha_1,\alpha_2\in \CH^1(F)_{\pr}$ are even; see \cite[Corollary~1.7]{H1}. This implies that $\alpha_1|_{F_{L_0}}\cdot \alpha_1|_{F_{L_0}}\in\CH^2(F_{L_0})^+$ is contained in the index two subgroup $\CH^2(D_{L_0})\subset\CH^2(F_{L_0})^+$.
\end{proof}

\begin{remark}
The result cannot be extended to the whole  
$\CH^1(F_{L_0})$. Indeed, the push-forward of $g^2|_{F_{L_0}}$, \textit{i.e.}\
$g^2\cdot[F_{L_0}]$, is not a multiple of $c_F(L_0)$, for
by \cite[Lemma~18.2]{SV} we have
\begin{eqnarray*}
  g^2\cdot\left[F_{L_0}\right]&=&[f(L_0)]-4[L_0]+24 c_F\\
&=&(4[L_0]_4-2[L_0]_2+[L_0]_0)-4[L_0]_4-4[L_0]_2-4[L_0]_0+24 c_F\\
&=&-6[L_0]_2-3[L_0]_0+24c_F,
\end{eqnarray*}
which can be a multiple of $c_F(L_0)$ only if $[L_0]_2=0$. By virtue of
\cite[Theorem~3.4]{SYZ}, the latter is in fact equivalent to $[L_0]=c_F$, so excluded for general $L_0$.

Also, extending to just the full anti-invariant part is not possible. Indeed,
as used before, the restriction of the Pl\"ucker polarisation satisfies
$\ko_F(1)|_{F_{L_0}}\cong\pi^\ast_0\ko(1)\otimes\ko(C_x)$ for any point $x\in L_0$. Hence, we have $(g_{F_{L_0}}-\iota^\ast(g_{F_{L_0}}))^2=C_x\cdot C_x+\iota(C_x\cdot C_x)-2 (C_x\cdot \iota(C_x))$ on $F_{L_0}$, which under
 push-forward to $F$ becomes
$[L_0]+f_\ast[L_0]-2(C_x\cdot \iota(C_x)=[L_0]+f_\ast[L_0]-2(6c_F-2[L_0])$,
which for general $L_0$ is not contained in $A_2\oplus A_0$. The last equality uses a formula in the proof of \cite[Lemma~18.2]{SV}.
\end{remark}

\begin{remark}\label{rem:final}
It also seems unlikely that the Beauville--Voisin
class $c_F(L_0)\in A_2\oplus A_0\subset\CH_0(F)$ can be
lifted to a Beauville--Voisin class $c_{L_0}\in \CH_0(F_{L_0})$. Indeed, as a consequence of Remark~\ref{rem:Proofrefined},
we know that
$$\psi\left(c_F(L_0)\right)|_{F_{L_0}}=2(g\cdot C-[L_0])\in \CH_0\left(F_{L_0}\right)
$$
with $C\in |\ko_F(1)|_{F_{L_0}}\otimes\pi_0^\ast\ko(-1)|$. So, if there exists
a Beauville--Voisin class in $\CH_0(F_{L_0})^+$ as in Remark~\ref{rem:Intro}, then
a natural guess would be that it must be proportional to $g\cdot C-[L_0]$.
Note, however, that the push-forward of $g\cdot C-[L_0]$ is typically not contained in $A_2\oplus A_0$, so not proportional to $c_F(L_0)$.
Even worse, the class $g\cdot C-[L_0]\in \CH_0(F_{L_0})$ is not even invariant. 
\end{remark}


\end{document}